\newcommand{\R}{\mathbb{R}}
\newcommand{\cc}{\mathbf{c}}
\newcommand{\x}{\mathbf{x}}
\newcommand{\dd}{\mathbf{d}}
\newcommand{\w}{\mathbf{w}}
\newcommand{\C}{\mathcal{C}}
\newtheorem{theorem}{\textbf{Theorem}}
\newtheorem{lemma}{Lemma}
\newtheorem{proposition}{Proposition}
\newtheorem{definition}{Definition}
\newtheorem{assumption}{Assumption}
\newtheorem{example}{Example}
\newtheorem{remark}{Remark}
\title{\LARGE \bf
Effect of Bonus Payments in Cost Sharing Mechanism Design for Renewable Energy Aggregation
}
\author{
	Farshad Harirchi, Tyrone Vincent and Dejun Yang
	\thanks{Farshad Harirchi is with the EECS Department, University of Michigan, Ann Arbor, MI, 48109. {\tt{harirchi@umich.edu}}}
	\thanks{Tyrone Vincent and Dejun Yang are with the EECS Department, Colorado School of Mines, Golden, CO, 80401. {\tt{\{tvincent, djyang\}@mines.edu}}}
}
\begin{document}
\maketitle
\thispagestyle{empty}
\pagestyle{empty}

%%%%%%%%%%%%%%%%%%%%%%%%%%%%%%%%%%%%%%%%%%%%%%%%%%%%%%%%%%%%%%%%%%%%%%%%%%%%%%%%%%
\begin{abstract}
	The participation of renewable energy sources in energy markets is challenging, mainly because of the uncertainty associated with the renewables. Aggregation of renewable energy suppliers is shown to be very effective in decreasing this uncertainty. In the present paper, we propose a cost sharing mechanism that entices the suppliers of wind, solar and other renewable resources to form or join an aggregate. In particular, we consider the effect of a bonus for surplus in supply, which is neglected in previous work. We introduce a specific proportional cost sharing mechanism, which satisfies the desired properties of such mechanisms that are introduced in the literature, e.g., budget balancedness, ex-post individual rationality and fairness. In addition, we show that the proposed mechanism results in a stable market outcome. Finally, the results of the paper are illustrated by numerical examples.
	
\end{abstract}
%%%%%%%%%%%%%%%%%%%%%%%%%%%%%%%%%%%%%%%%%%%%%%%%%%%%%%%%%%%%%%%%%%%%%%%%%%%%%%%%%%	
\section{introduction}
Renewable energy resources are being widely employed because of their cleanness and low marginal cost. Large penetration of renewable energy is desirable in the future smart grids, because of these advantages \cite{Bitarselling2012}. The drawback of renewable energy sources is the uncertainty associated with them, which is caused by the uncertainty of wind or solar energy. There are subsidies for renewable energy suppliers in current markets \cite{Makarov2009, Bitarselling2012}, but such subsidies reduce the social welfare in the electricity markets and should not be applied in the future smart grids. Removing subsidies makes the future electricity markets more competitive for renewable energy producers, because they have to compete with other suppliers in the market in equal conditions. In such a market the uncertainty associated with renewable energy is a drawback. Various studies show that the renewable energy produced in different geographical areas often have negative correlation and aggregating them reduces the amount of uncertainty of these resources \cite{EnerNex2010,NERC2009 }. However, aggregating renewable energy suppliers is not possible without appropriate payment sharing mechanisms. Renewable aggregation mechanisms are considered within different system levels from small size residential units \cite{Babakmehr2016Design} to the large scale microgrids \cite{BitarNodal2012}. These mechanisms are often designed to result in a stable market outcome, and incentivize suppliers to join the aggregate by paying them more than what they earn outside the aggregate. The desired properties that a payment sharing mechanism should satisfy such as budget balancedness, ex-post individual rationality and fairness are investigated in the literature\cite{Nayyar2013, Harirchi2014, Lin2014}. 

\subsection{Literature Review}
Bitar \textit{et. al.} \cite{BitarOptimal2010, BitarBringing2012} addressed the problem of finding the optimal bid for the aggregate using convex optimization techniques. This work has been followed by \cite{Nayyar2013, BitarNodal2012, Harirchi2014, Lin2014, BaeyensCoalitional2013} with introducing the contract game and designing payment sharing mechanisms, which satisfy a list of properties and result in stable market outcome. Baeyens \textit{et. al.} \cite{BaeyensCoalitional2013} proposed a payment sharing mechanism based on the expected wind power production of each supplier, which depends on the probability distribution of the power of the aggregate. They also showed that the core of the coalitional contract game is not empty, and that there exists at least one set of payments that stabilizes the market. The payoffs for the suppliers are calculated by solving a convex optimization problem that is computationally demanding. 

Nayyar {\em et. al.} proposed a payment sharing mechanism that is based on each supplier's performance rather than the expected production \cite{Nayyar2013}. This approach provides results on the existence of Nash equilibria for the contract game defined by this payment sharing mechanism, however it does not optimize the payoff of the aggregate. In \cite{Harirchi2014}, a payment sharing mechanism is proposed in which the payments are based on the production of individual suppliers, and it optimizes the payoff for the aggregate. 

As an alternative to the above-mentioned mechanisms, here is considered a proportional cost sharing mechanism, which provides an intuitive measure of each supplier's portion in the total cost/bonus of the aggregate. 

Lin \textit{et. al.} \cite{Lin2014} investigated the proportional cost sharing mechanisms for renewable energy aggregation. They assumed that the aggregate can avoid the excess of supply. The effect of energy surplus for aggregate is neglected in their analysis. Even though tools such as demand response and storage can be employed to manage power excess in the grid, the effect of surplus cannot be completely neglected in designing cost sharing mechanisms. 

\subsection{Contributions and Outline}
In this paper, we propose a proportional cost sharing mechanism. This cost sharing mechanism is proved to satisfy the desired properties such as budget balancedness, ex-post individual rationality and fairness. The main contribution of the paper is that we consider surplus for the aggregate, and a bonus for surplus is allowed in our analysis.

In addition, we study the existence of Nash equilibria for the contract game defined by proposed cost sharing mechanism, which becomes more challenging when the effect of bonus is considered.

The structure of the paper is as follows: In Section \ref{Sec:model}, the forward electricity market model utilized in this work is described. The proportional cost sharing mechanisms and the effect of adding bonus for the surplus of energy are studied in Section \ref{Sec:proportional}. The contract game and the results on the existence of Nash equilibria are drawn in Section \ref{Sec:Nash}. Two illustrative examples are discussed in Section \ref{Sec:example}, and conclusions are made in Section \ref{Sec:conclusion}.

%%%%%%%%%%%%%%%%%%%%%%%%%%%%%%%%%%%%%%%%%%%%%%%%%%%%%%%%%%%%%%%%%%%%%%%%%%%%%%%%%%
		
\section{Model} \label{Sec:model}
\textbf{Notation}: In this work, $\x\in \R^n$ represents a vector. We denote the $\max\{0,x\}$ with $[x]^+$, and the set of natural numbers up to $s$ is indicated by $\mathbb{N}_s$. The set of positive real numbers are denoted by $\mathbb{R}^+$

\subsection{Market Structure}
We consider a forward electricity market model that is very similar to the one used in \cite{Lin2014}. The market considered here is perfectly competitive, meaning that no supplier or consumer can affect the market clearing price. In such a market, consider an aggregate with a set $\mathcal{N} = \{1,2, \hdots , n\}$ of $n$ suppliers. Each supplier $i \in \mathcal{N}$ offers a contract $c_i$, which represents her commitment of producing $c_i$ units of energy for a given time period in future. Consider an aggregate with $n$ suppliers offering contracts. The market model contains the following two steps:

\begin{itemize}
\item \textit{ex-ante:} The price of unit of energy is denoted by $f(\hat{c}_{tot})$, where $f: \mathbb{R}^+ \rightarrow \mathbb{R}^+$ and $\hat{c}_{tot}$ is the total contract submitted to system operator. The price is announced by the system operator for a specific trading period. In this step, all the suppliers offer their contracts to the aggregate manager. The contract profile $\cc = [c_1,\hdots,c_n]$ contains all the individual contracts. The feasible set for the contract of supplier $i$ is $[0, \; c_{i,max}]$, where $c_{i,max}$ indicates the nameplate capacity of supplier $i$. Clearly the sets of contracts for the suppliers are compact and convex. The feasible set for supplier $i$ is denoted by $\mathcal{C}_i$ and the set of all possible contract profiles is represented by $\C = \mathcal{C}_1\times \mathcal{C}_2 \times \hdots \times \mathcal{C}_n$. Clearly, the set $\mathcal{C}$ is also a compact and convex set. In addition, we assume that the aggregate contract is the sum of individual contracts: $c = \sum_{i=1}^n c_i$ \cite{BitarOptimal2010,Nayyar2013}. Furthermore, assume that all the suppliers within the aggregate have the same a priori information about the imbalance prices, therefore, identical estimations. Our analysis is based on these estimated imbalance prices.
\item \textit{ex-post:} This step occurs after the trading period, where each supplier realizes an amount of production, e.g., for supplier $i$ the production amount is $w_i \in \mathcal{C}_i$. The supply profile is denoted by $\w = [w_1,\hdots,w_n]$ and the aggregate supply is $w = \sum_{i=1}^n w_i$. Consider $F: \mathcal{C} \rightarrow [0,1]$ as the joint probability distribution of the supply profile on $w$ and $F_i: \mathcal{C}_i \rightarrow [0,1]$ as its marginal distribution on $w_i$. 
\end{itemize}
\begin{remark}
	The imbalance prices are not announced until the {\em ex-post} step. However, suppliers need to have some a priori information about those prices to be able to choose the right strategy in the forward market. We will assume that $\theta = (q, \lambda)$ is the pair of expected imbalance prices, where $q>0$ is the penalty associated with the shortfall and $\lambda$ is the penalty ($\lambda<0$) or bonus ($\lambda \geq 0$) for surplus in production. A potential future path is to extend the analysis proposed in this paper to consider the effect of uncertainty in the estimates of the imbalance prices, when all suppliers have the same or different a priori information.
\end{remark}
\begin{remark}
	Throughout the paper, we refer to negative cost as {\em bonus}.
\end{remark}
Let us denote the deviation from the contract for each supplier with $d_i = c_i-w_i$ and the deviation profile with $\dd = [d_1, d_2 , \hdots , d_n]$. The deviation of the aggregate from its contract, then is represented by $d = \sum_{i=1}^n d_i$. 
The expected payoff of the aggregate from the system is obtained from:
\begin{equation}
\pi(c) = f(\hat{c}_{tot})c -\mathbb{E}[S(d,\theta)] ,
\end{equation}
where $S(d,\theta)$ represents the cost to the aggregate charged by the system operator for $d$ units of energy deviation from the contract, based on the set of expected imbalance prices $\theta$. Since we assumed that $\theta$ is deterministic, the expectation is with respect to $d$, which depends on $w$ that is not known in the {\em ex-ante} market. 
\subsection{System Cost Function}
In the present work, we assume the following cost function for the system, which is also leveraged elsewhere, e.g. \cite{Lin2014}:
\begin{equation} \label{costfunction}
S(d,\theta) = q[d]^+ -\lambda [-d]^+.
\end{equation}
There are other cost function models utilized in the literature \cite{Nayyar2013, BitarBringing2012, Harirchi2014}. The generalization of the results of this paper can be investigated on other types of cost functions, which is subject to future work.

%%%%%%%%%%%%%%%%%%%%%%%%%%%%%%%%%%%%%%%%%%%%%%%%%%%%%%%%%%%%%%%%%%%%%%%%%%%%%%%%%%

\section{Proportional Cost Sharing Mechanism} \label{Sec:proportional}
The aggregate cost function is used by the aggregate to share the realized cost/bonus, $S(d,\theta)$, among the suppliers based on their individual deviations from the submitted contract. The aggregate cost function is not necessarily the same as the system cost function. In this work, we consider a class of aggregate cost functions that are known as proportional cost functions, and we define them as follows:
\begin{definition} \label{Def:proportional}
	The {\em proportional cost sharing mechanism} is a mapping $\phi$ from $\mathbb{R}^n\times \mathbb{R}^2\times \mathcal{J}$ corresponding to deviation profile, $\dd$, the pair of imbalance prices, $\theta$, and the set of admissible cost functions, $J$, into $\mathbb{R}^n$, where the cost share for supplier $i$, $\phi_i(\dd,\theta,J)$, is defined as:
	\begin{equation} \label{eqn:costfunc}
	\phi_i(\dd,\theta,J) = \frac{J(d,\theta)}{\sum_{j = 1}^{n}J(d_j,\theta)}J(d_i,\theta).
	\end{equation}
An admissible cost function, $J \in \mathcal{J}$, results in a proportional cost sharing mechanism $\phi_i(\dd,\theta,J)$ that satisfies the following properties:
		\begin{enumerate}
			\item Budget balanced: $\sum_{i=1}^n\phi_i(\dd,\theta,J)=S(d,\theta)$
			\item Ex-post individual rationality: If $S(d_i,\theta)\geq 0$, then $\phi_i(\dd,\theta,J) \leq S(d_i,\theta)$
			\item No exploitation: $\phi_i(\dd,\theta,J) = 0$, when $d_i = 0$. 
			\item Fairness: if $d_i = d_j$, then  $\phi_i(\dd,\theta,J) = \phi_j(\dd,\theta,J)$.
		\end{enumerate}
\end{definition}

\textbf{Notation}: Note that $J$ is a function of the deviation profile, the net deviation of aggregate and the imbalance prices. For the sake of simplicity in the notation, we denote it with two arguments as $J(d_i,\theta)$.

The properties mentioned above are used as desired properties of a cost sharing mechanism in the literature \cite{Lin2014,Harirchi2014,BitarBringing2012,Nayyar2013}. There is one more property that is introduced in \cite{Lin2014} for the case with no bonus, $\lambda<0$, which is called monotonicity. We define the monotonicity for the general choice of $\lambda$, where it can be negative or positive.
\begin{definition}
Let $\Delta^+ = \{ i \in \mathcal{N} | d_i \leq 0 \}$ represent the indices for suppliers with surplus, and $\Delta^- = \{ i \in \mathcal{N} | d_i>0 \}$ the indices for suppliers with shortfall. Consider the following two cases:
\begin{itemize}
	\item $\lambda\leq0$: $\phi_i(\dd,\theta,J)$ satisfies monotonicity, if for any $i,j \in \Delta^+$ or $i,j \in \Delta^-$ such that $|d_i|\geq |d_j|$, we have $\phi_i(\dd, \theta,J)  \geq \phi_j(\dd, \theta,J)$.
	\item $\lambda > 0$: $\phi_i(\dd,\theta,J)$ satisfies monotonicity, if for any $i,j \in \Delta^+$ such that $d_i\leq d_j$ or $i,j \in \Delta^-$ such that $|d_i| \geq |d_j|$, we have $\phi_i(\dd, \theta,J)  \geq \phi_j(\dd, \theta,J)$.
\end{itemize}
\end{definition}
Monotonicity can be interpreted as a fairness condition such that if a positive or negative deviation is penalized, the supplier with more deviation has to be penalized no less than a supplier with less deviation, and if surplus is appreciated by bonus, the bonus for a supplier with more surplus must not be less than the bonus for one with less surplus. Clearly, monotonicity is a desired property.

%%%%%%%%%%%%%%%%%%%%%%%%%%%%%%%%%%%%%%%%%%%%%%%%%%%%%%%%%%%%%%%%%%%%%%%%%%%%%%%%%%
\subsection{Choice of Cost Function}
In this section, we consider two candidates for cost function $J$, and study the advantages and drawbacks of each one. 

\subsubsection{Candidate 1} The first candidate that comes into mind is the system cost function, $S(d,\theta)$. We indicate this function by $\tilde{J}$, and define it as follows:
\begin{equation}\label{eqn:firstcost}
\tilde{J}(d_i,\theta) = q[d_i]^+ - \lambda [-d_i]^+.
\end{equation}
The proportional cost sharing mechanism with this choice of cost function has the following form:
\begin{equation}\label{eqn:propcost1}
\phi_i(\dd,\theta,\tilde{J}) = \alpha(\dd)(q[d_i]^+ - \lambda [-d_i]^+),
\end{equation}
where
\begin{equation} \label{eqn:beta}
\alpha(\dd) = \frac{q[d]^+ -\lambda [-d]^+}{q\sum_{j=1}^n [d_j]^+ -\lambda \sum_{j=1}^n [-d_j]^+}.
\end{equation}

\begin{proposition} \label{pro:prop1}
	According to Def. \ref{Def:proportional}, the cost sharing mechanism $\phi_i(\dd,\theta,\tilde{J})$ is a proportional cost sharing mechanism, if $\lambda \leq 0$.
\end{proposition}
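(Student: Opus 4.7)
The plan is to verify the four properties in Definition \ref{Def:proportional} one by one, and to track carefully where the hypothesis $\lambda \leq 0$ is actually used. I will assume throughout that the denominator $q\sum_j [d_j]^+ - \lambda \sum_j [-d_j]^+$ in $\alpha(\dd)$ is strictly positive; the edge case of a vanishing denominator is discussed at the end.

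First I would handle the two easy properties. \emph{Fairness} follows immediately from the symmetric form of \eqref{eqn:propcost1}: if $d_i = d_j$, then $\tilde{J}(d_i,\theta) = \tilde{J}(d_j,\theta)$ and hence $\phi_i = \phi_j$ since $\alpha(\dd)$ does not depend on $i$. \emph{No exploitation} follows because $d_i = 0$ gives $[d_i]^+ = [-d_i]^+ = 0$, which makes the $\tilde{J}(d_i,\theta)$ factor in \eqref{eqn:propcost1} vanish. Next, \emph{budget balance} is a one-line algebraic check: multiplying \eqref{eqn:propcost1} and summing over $i$, the sum $\sum_i(q[d_i]^+ - \lambda[-d_i]^+)$ cancels the denominator of $\alpha(\dd)$ and leaves $q[d]^+ - \lambda[-d]^+ = S(d,\theta)$.

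The substantive step, and the only place where $\lambda \leq 0$ is needed, is \emph{ex-post individual rationality}. Here I want to show $\phi_i(\dd,\theta,\tilde{J}) \leq S(d_i,\theta)$ whenever $S(d_i,\theta) \geq 0$. Since $\phi_i = \alpha(\dd)\, S(d_i,\theta)$ and $S(d_i,\theta) \geq 0$, it suffices to show $\alpha(\dd) \leq 1$. To do this I would use the subadditivity of the positive part, namely $[a+b]^+ \leq [a]^+ + [b]^+$ and $[-(a+b)]^+ \leq [-a]^+ + [-b]^+$, applied inductively to $d = \sum_j d_j$ to get
\begin{equation*}
[d]^+ \leq \sum_{j=1}^n [d_j]^+, \qquad [-d]^+ \leq \sum_{j=1}^n [-d_j]^+.
\end{equation*}
The hypothesis $\lambda \leq 0$ enters crucially now: because $q>0$ and $-\lambda \geq 0$, both coefficients in the numerator and denominator of $\alpha(\dd)$ in \eqref{eqn:beta} are nonnegative, so the two subadditivity inequalities can be scaled and added to conclude
\begin{equation*}
q[d]^+ - \lambda[-d]^+ \leq q\sum_j[d_j]^+ - \lambda\sum_j[-d_j]^+,
\end{equation*}
which gives $\alpha(\dd) \leq 1$. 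This is the place where a bonus ($\lambda > 0$) would break the argument, since then $-\lambda < 0$ and the inequality would flip on the surplus part. I would also observe in passing that when $\lambda \leq 0$ one in fact has $S(d_i,\theta) \geq 0$ automatically, so the ``if'' hypothesis of property 2 is vacuous.

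The main (and only) obstacle is really bookkeeping around the possibility that $\alpha(\dd)$ is of the form $0/0$. Under $\lambda \leq 0$ the denominator vanishes only when $q[d_j]^+ = 0$ for all $j$ and, if $\lambda<0$, also $[-d_j]^+ = 0$ for all $j$; in either case the numerator also vanishes and each $\tilde{J}(d_i,\theta) = 0$, so defining $\phi_i := 0$ by convention keeps all four properties intact. I would include a brief remark to this effect so the proof is complete.
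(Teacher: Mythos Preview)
Your proof is correct and follows essentially the same route as the paper's: both verify the four properties directly, use the subadditivity inequalities $\sum_j[d_j]^+ \geq [d]^+$ and $\sum_j[-d_j]^+ \geq [-d]^+$ together with $\lambda \leq 0$ to bound $\alpha(\dd)\leq 1$ for ex-post individual rationality, and dispose of the degenerate $0/0$ case by convention. Your handling of the edge case is in fact slightly more careful than the paper's, since you catch the $\lambda=0$ subcase in which the denominator can vanish even when not all $d_j$ are zero.
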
 
\begin{proof}
			Let us check each property separately:
	\begin{itemize}
		\item Budget balanced: Clearly, 
		\begin{align*}
		\sum_{i=1}^n \phi_i(\dd,\theta,\tilde{J}) & = \frac{\tilde{J}(d,\theta)}{\sum_{j = 1}^{n}\tilde{J}(d_j,\theta)}\sum_{i=1}^n\tilde{J}(d_i,\theta)\\
		& = \tilde{J}(d,\theta) = S(d,\theta).
		\end{align*}		 
		\item Ex-post individual rationality: By replacing $\tilde{J}(d_i,\theta)$ with $S(d_i,\theta)$, we have:
		\begin{align*}
		\phi_i(\dd,\theta,\tilde{J}) = \frac{q[d]^+ -\lambda [-d]^+}{q\sum_{i=1}^n [d_i]^+ -\lambda \sum_{i=1}^n [-d_i]^+}S(d_i,\theta).
		\end{align*}
		We also know that the following two inequalities hold:
		\begin{align*}
		& \sum_{i=1}^n [d_i]^+ \geq \Big [\sum_{i=1}^n d_i \Big ]^+ =[d]^+\\
		& \sum_{i=1}^n [-d_i]^+ \geq \Big[\sum_{i=1}^n -d_i \Big]^+ =[-d]^+.
		\end{align*}
		Hence, if $\lambda<0$, then
		\begin{align*}
		& \frac{q[d]^+ -\lambda [-d]^+}{q\sum_{i=1}^n [d_i]^+ -\lambda \sum_{i=1}^n [-d_i]^+} \leq 1,\\ & \implies \phi_i(\dd,\theta,\tilde{J}) \leq S(d_i,\theta) .
		\end{align*}
		\item No exploitation: This is clearly satisfied because if $d_i = 0$, then $\tilde{J}(d_i,\theta) = 0$, hence $\phi_i(\dd,\theta,\tilde{J}) = 0$. Note that for the case that all the deviations are zero, the denominator becomes zero. For this particular case, we define $\phi_i(\dd,\theta,\tilde{J})$ separately to be zero.
		\item Fairness: Based on the form of the cost function, $\tilde{J}$, we know that, if $d_i = d_j$, then $\tilde{J}(d_i,\theta) = \tilde{J}(d_j,\theta)$. Therefore:
		\begin{align*}
		& \frac{\tilde{J}(d,\theta)}{\sum_{j = 1}^{n}\tilde{J}(d_j,\theta)}\tilde{J}(d_i,\theta) = \frac{\tilde{J}(d,\theta)}{\sum_{i = 1}^{n}\tilde{J}(d_i,\theta)}\tilde{J}(d_j,\theta).
		\end{align*}
		The equality above is obtained by noticing that the two fractions of both sides are identical, and $\tilde{J}(d_i,\theta) = \tilde{J}(d_j,\theta)$. Then, we have
		\[
		\phi_i(\dd,\theta,\tilde{J}) = \phi_j(\dd,\theta,\tilde{J}).
		\]
	\end{itemize}
\end{proof}
Clearly, if $\lambda \leq 0$, then $\phi(\dd,\theta,\tilde{J})$ satisfies the monotonicity property as well. In other words, if there is no bonus in the market, it can be claimed that $\tilde{J}$ is a decent choice for the cost function, because it satisfies all of the desired properties mentioned in Def. \ref{Def:proportional} as well as monotonicity. However, if we consider the effect of bonus, one can immediately see that $\tilde{J}$ is not even a valid proportional cost sharing mechanism according to Def. \ref{Def:proportional}, because it does not necessarily satisfy ex-post individual rationality property. We illustrate this with the following example.
\begin{example}
	Assume $\lambda >0$, and the deviations of the suppliers from their contracts are such that the following holds:
	\begin{align*}
	q\sum_{i=1}^n [d_i]^+ -\lambda \sum_{i=1}^n [-d_i]^+ = \epsilon,
	\end{align*} 
	where $\epsilon > 0$. Considering the fact that $\lambda >0$, one can see that $\epsilon$ can be very small. On the other hand, assume that the total deviation of the aggregate is $\tilde{d} > 0$, then $S(d,\theta) = q\tilde{d}$. Now, as we can make $\epsilon$ arbitrarily small, let $\epsilon = \frac{q\tilde{d}}{2}$. The share of supplier $i$ is obtained as follows:
	\begin{align*}
	\phi_i(\dd,\theta,\tilde{J}) = \frac{q\tilde{d}}{\frac{q\tilde{d}}{2}}\tilde{J}(d_i,\theta) = 2S(d_i,\theta) > S(d_i, \theta) \text{ if } d_i > 0,
	\end{align*}
	which contradicts the ex-post individual rationality property. Similar analysis can be performed to show the dissatisfaction of the monotonicity property, when bonus is considered.
\end{example}
The above-mentioned problems motivate us to investigate other alternatives that can satisfy all the desired properties for proportional cost sharing mechanisms. 

\subsubsection{Candidate 2} Here, we propose the second candidate cost function:
\begin{equation}
J^*(d_i,\theta) = \mathcal{I}_{(d\geq 0)}q[d_i]^+ - \mathcal{I}_{(d<0)}\lambda [-d_i]^+,
\end{equation}
where $\mathcal{I}$ represents the indicator function that is 1 when the condition in subscript is satisfied and zero otherwise. Intuitively, the cost function introduced above penalizes supplier $i$ only if she increases the cost of the aggregate. In other words, if a supplier has a shortfall in energy, she is only penalized if the aggregate also incurs shortfall. On the other hand, if there is bonus for surplus, then the bonus is shared only amongst the suppliers with surplus.

The proportional cost sharing mechanism with cost function $J^*$ has the following form.
\begin{equation} \label{eqn:secondcost}
\phi_i(\dd,\theta,J^*) = \begin{cases}
\beta^+(\dd)(qd_i) & \text{   if  } d_i \geq 0 \: \& \; d \geq 0 \\
\beta^-(\dd)(\lambda d_i) &  \text{   if  } d_i < 0 \; \& \; d<0 \\
0 & \text{otherwise}
\end{cases},
\end{equation}
where
\begin{align}\label{eqn:eta}
\beta^+(\dd) &= \frac{d}{\sum_{i=1}^n [d_i]^+}, \; \beta^-(\dd) &= \frac{-d}{\sum_{i=1}^n [-d_i]^+}.
\end{align}

\begin{proposition} \label{pro:prop2}
	The cost sharing mechanism $\phi_i(\dd,\theta,J^*)$ is a  proportional cost sharing mechanism according to Def.~\ref{Def:proportional} regardless of the choice of $\lambda$. In addition, it satisfies monotonicity.
\end{proposition}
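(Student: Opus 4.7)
The proof verifies all four properties of Def.~\ref{Def:proportional} together with monotonicity by a case analysis on the sign of the aggregate deviation $d$ and, within each case, the sign of $d_i$. Since $\phi_i(\dd,\theta,J^*)$ is itself piecewise defined along exactly this partition, each property reduces to a direct identity or inequality once the branch is fixed; no single step is conceptually deep.

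\textbf{Budget balance and ex-post individual rationality.} I would first sum (\ref{eqn:secondcost}) over $i$ in each branch. When $d\geq 0$, only indices with $d_i\geq 0$ contribute, and
\[
\sum_{i=1}^n\phi_i(\dd,\theta,J^*) = \beta^+(\dd)\,q\sum_{i}[d_i]^+ = qd = S(d,\theta)
\]
by the definition of $\beta^+$; the $d<0$ branch is symmetric and yields $-\lambda[-d]^+=S(d,\theta)$. For ex-post individual rationality, the two inequalities $[d]^+\leq\sum_j[d_j]^+$ and $[-d]^+\leq\sum_j[-d_j]^+$, already used in Prop.~\ref{pro:prop1}, give $\beta^+(\dd)\leq 1$ and $\beta^-(\dd)\leq 1$. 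The hypothesis $S(d_i,\theta)\geq 0$ restricts us to either $d_i\geq 0$ (where $S(d_i,\theta)=qd_i$ and $\phi_i=\beta^+(\dd)qd_i\leq qd_i$) or $d_i<0$ with $\lambda\leq 0$ (where the analogous $\beta^-$ bound works); in the "otherwise" branch $\phi_i=0\leq S(d_i,\theta)$.

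\textbf{No exploitation, fairness, monotonicity.} No exploitation is immediate, since $d_i=0$ zeroes out every branch of $\phi_i$. Fairness follows because within each branch $\phi_i$ is a linear function of $d_i$ with an $i$-independent coefficient ($\beta^+(\dd)q$ or $\beta^-(\dd)\lambda$), so $d_i=d_j$ places the two suppliers in the same branch with equal shares. For monotonicity, within the branch $d\geq 0$ the share $\phi_i=\beta^+(\dd)qd_i$ is monotone in $d_i$ on $\Delta^-$, and within the branch $d<0$ the share $\phi_i=\beta^-(\dd)\lambda d_i$ is monotone in $d_i$ on $\Delta^+$ with a direction determined by the sign of $\lambda$, matching the two cases in the definition of monotonicity; cross-branch pairs fall into the "otherwise" slot where the relevant shares are zero and the ordering holds trivially.

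\textbf{Main obstacle.} The only delicate point is the degenerate case in which a denominator of (\ref{eqn:eta}) vanishes. In the $d\geq 0$ branch, $\sum_j[d_j]^+=0$ forces $d_j\leq 0$ for all $j$, hence $d=\sum_j d_j\leq 0$, which combined with $d\geq 0$ gives $d_j=0$ for all $j$; we then set $\phi_i\equiv 0$ by convention (as in Prop.~\ref{pro:prop1}) and all properties hold trivially. The $d<0$ branch is handled symmetrically. Aside from this bookkeeping, the proof is a methodical branch-by-branch verification rather than a single conceptually hard step.
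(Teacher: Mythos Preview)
Your proposal is correct and follows essentially the same approach as the paper: a branch-by-branch verification of each property using the sign of $d$ and of $d_i$, the bounds $\beta^{+}(\dd),\beta^{-}(\dd)\le 1$ from $[d]^+\le\sum_j[d_j]^+$ and $[-d]^+\le\sum_j[-d_j]^+$, and linearity of each branch in $d_i$ for fairness and monotonicity. Your explicit handling of the degenerate denominator case is slightly more careful than the paper's version.
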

\begin{proof}
	Let us check all the properties separately.\\
\textit{Budget balanced}: similar to the proof of budget balanced in Proposition \ref{pro:prop1}:
		{\small \begin{align*}
		\sum_{i=1}^n \phi_i(\dd,\theta,J^*) & = \frac{J^*(d,\theta)}{\sum_{j = 1}^{n}J^*(d_j,\theta)}\sum_{i=1}^n J^*(d_i,\theta) = J^*(d,\theta)\\
		& = S(d,\theta).
		\end{align*}}	
\textit{Ex-post individual rationality}: First, consider the following fact:
		{\small\begin{align*}
		S(d_i,\theta) = \begin{cases}
		qd_i & \text{   if  } d_i \geq 0 \\
		\lambda d_i &  \text{   if  } d_i < 0
		\end{cases}.
		\end{align*}}
		Then we can write the cost sharing mechanism as follows:
		\begin{align*}
		\phi_i(\dd,\theta,J^*) = \beta(\dd)S(d_i,\theta) 
		\end{align*}
		where $\beta(\dd)$ is defined as:
		{\small\[
		\beta(\dd) = \begin{cases}
		\beta^+(\dd) & \text{   if  } d_i \geq 0 \: \& \; d \geq 0 \\
		\beta^-(\dd) &  \text{   if  } d_i < 0 \; \& \; d<0 \\
		0 & \text{otherwise}
		\end{cases}.
		\]
		Moreover, we know that the following two inequalities hold:
		{\small \begin{align*}
		& \sum_{i=1}^n [d_i]^+ \geq \Big [\sum_{i=1}^n d_i \Big ]^+ =[d]^+\\
		& \sum_{i=1}^n [-d_i]^+ \geq \Big[\sum_{i=1}^n -d_i \Big]^+ =[-d]^+,
		\end{align*}}}
		which means $\beta(\dd) \leq 1$ for all the cases. Now consider all the possible cases:
		\begin{enumerate}
			\item $d_i \geq 0 \; \& \; d \geq 0$: In this case, clearly $\phi_i(\dd,\theta,J^*)\leq S(d_i,\theta)$.
			\item $d_i < 0 \; \& \; d < 0$: Again as $\beta(\dd) \leq 1$, $\phi_i(\dd,\theta,J^*)\leq S(d_i,\theta)$.
			\item $d_i > 0 \; \& \; d < 0$: In this case, $\phi_i(\dd,\theta,J^*) = 0 \leq S(d_i,\theta)$.
			\item $d_i < 0 \; \& \; d > 0$: In this case we should consider two cases: a) $\lambda > 0$: then $S(d_i,\theta)<0$, b) $\lambda < 0$, which concludes that $\phi_i(\dd,\theta,J^*) = 0 \leq S(d_i,\theta)$.
		\end{enumerate}
		As we can see for all possible cases the ex-post individual rationality is satisfied.
		
\textit{No exploitation}: This is clearly satisfied because $J^*(d_i,\theta) = 0$ if $d_i = 0$.

\textit{Fairness}: Note that $d$ is the same for both suppliers $i$ and $j$, hence $J^*(d_i,\theta) = J^*(d_j,\theta)$. The rest of the proof is similar to the proof of fairness in Proposition \ref{pro:prop1}.

\textit{Monotonicity}: Consider the following cases:
\begin{enumerate}
\item $i,j \in \Delta^-$: Then for $k \in \{i,j\}$, we have $\phi_k(\dd,\theta,J^*) = \beta(\dd)q d_k$, then if $d_i \geq d_j$, then $ \beta(\dd)q d_i \geq  \beta(\dd)q d_j$. Note that $\beta(\dd)$ is same for both suppliers.
\item $i,j \in \Delta^+$: Then for $k \in \{i,j\}$, we have $\phi_k(\dd,\theta,J^*) = \beta(\dd)\lambda d_k$, assume $|d_i| \geq |d_j|$, then if $\lambda \leq 0$, then $\beta(\dd)\lambda d_i \geq \beta(\dd)\lambda d_j$, and if $\lambda > 0$, then $\beta(\dd)\lambda d_j \geq \beta(\dd)\lambda d_i$.
\end{enumerate}
\end{proof}

Next, we turn our attention to the contract game composed of the proposed cost sharing mechanism, and the market properties associated with this contract game.

%%%%%%%%%%%%%%%%%%%%%%%%%%%%%%%%%%%%%%%%%%%%%%%%%%%%%%%%%%%%%%%%%%%%%%%%%%%%%%%
\section{Contract Game and Existence of Nash Equilibria} \label{Sec:Nash}

%%%%%%%%%%%%%%%%%%%%%%%%%%%%%%%%%%%%%%%%%%%%%%%%%%%%%%%%%%%%%%%%%%%%%%%%%%%%%%%
\subsection{Contract Game Formulation}
The contract game $G(\mathcal{N},\mathcal{C},\pi)$ inside an aggregate is defined by the following elements:
\begin{itemize}
	\item \textit{Players}: set of all suppliers: $\mathcal{N} = \{1, 2, \hdots ,n \}$.
	\item \textit{Strategies}: set of feasible strategies, $\mathcal{C}$, which is assumed to be convex and compact.
	\item \textit{Payoffs}: payoff for supplier $i$ is defined as:
	\[
	\pi_i := f(\hat{c}_{tot})c_i - \mathbb{E}[\phi_i(\dd,\theta,J)]
	\]
\end{itemize}
and $\pmb{\pi}=[\pi_1, \hdots \pi_n]$ is the payment profile.

We assume perfect competition in the market, which means that $f(\hat{c}_{tot})$ is independent of $c$. In other words, the total supply of the aggregate is negligible compared to the whole grid. In practice, the market clearing price for a trading period, is known before the dispatch process.

\begin{assumption}\label{a:fixp}
	 For the remainder of paper, we explicitly assume: $f(\hat{c}_{tot}) = p$, where $p$ is a constant. In order to avoid trivial market outcome, we also assume: $|\lambda| < p < q$.
\end{assumption}

%%%%%%%%%%%%%%%%%%%%%%%%%%%%%%%%%%%%%%%%%%%%%%%%%%%%%%%%%%%%%%%%%%%%%%%%%%%%%%%

\subsection{Existence of Nash equilibria}
In the game theory and mechanism design, one of the desired properties that a game leveraging the designed mechanism is required to satisfy is the existence of at least one pure strategy Nash equilibrium. A pure strategy defines a specific move or action that a player will follow in every possible attainable situation in a game. Such moves may not be random, or drawn from a distribution. A pure strategy Nash equilibrium is defined as follows:
\begin{definition}
	A set of pure strategies is called a {\em pure strategy Nash equilibrium} if no player can increase their payoffs by unilaterally changing their strategies \cite{nisan2007algorithmic}.
\end{definition}
With a slight abuse of notation, we call the market with a pure strategy Nash equilibrium, a stable market. In what follows, we check whether or not the cost sharing mechanism defined by $\phi_i(\dd,\theta,J^*)$ constitutes a stable market. The answer to this question is provided by Theorem \ref{thm:J2}, which is the main result of the paper.

\begin{theorem}\label{thm:J2}
	Under Assumption \ref{a:fixp}, the contract game $G = (\mathcal{N},\mathcal{C},\pmb{\pi})$ defined by the proportional cost sharing mechanism with cost function $J^*$, and the set of convex and compact strategies $\mathcal{C}$ has at least one pure strategy Nash equilibrium.
\end{theorem}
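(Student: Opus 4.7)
The natural plan is to invoke the Debreu--Fan--Glicksberg existence theorem, which guarantees a pure-strategy Nash equilibrium whenever each player's strategy set is nonempty, compact, and convex, and each payoff $\pi_i$ is jointly continuous in $\cc$ and quasi-concave in $c_i$ for every fixed $c_{-i}$. Compactness and convexity of each $\mathcal{C}_i = [0, c_{i,\max}]$ are built into the problem, so I would need to verify only (i) continuity of $\pi_i(\cc) = p c_i - \mathbb{E}_{\w}[\phi_i(\dd, \theta, J^*)]$ in $\cc$ and (ii) quasi-concavity of $\pi_i$ in $c_i$.

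I would dispatch (i) first. On each of the three regions of (\ref{eqn:secondcost}), the share $\phi_i$ is a continuous rational function of $\cc$, and the pieces glue continuously at the boundaries because the numerator ($d$ or $d_i$) forces $\phi_i \to 0$ there; hence $\phi_i(\cdot, \theta, J^*)$ is continuous in $\cc$ for every fixed realization $\w$. The ex-post individual rationality established in Proposition \ref{pro:prop2} provides the uniform bound $|\phi_i| \leq |S(d_i,\theta)| \leq \max(q,|\lambda|)\,c_{i,\max}$, so dominated convergence delivers continuity of $\mathbb{E}_{\w}[\phi_i]$ and thus of $\pi_i$.

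The substance lies in (ii). Fix $c_{-i}$ and a realization $\w$, set $D = \sum_{j\neq i}(c_j - w_j)$ and $S^{\pm} = \sum_{j\neq i}[\pm d_j]^+$, and examine the three pieces of $c_i \mapsto \pi_i(c_i, c_{-i}; \w)$ separated by the break points $c_i = w_i$ and $c_i = w_i - D$. A direct differentiation shows the second derivative is proportional to $S^+(D - S^+)$ on case 1, is $0$ on the ``otherwise'' piece, and is proportional to $\lambda(D + S^-)$ on case 2. Since $D - S^+ = \sum_{j\neq i,\,d_j<0} d_j \leq 0$ and $D + S^- = \sum_{j\neq i,\,d_j>0} d_j \geq 0$, case 1 is always concave in $c_i$ and case 2 is concave iff $\lambda \leq 0$. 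For $\lambda \leq 0$ a brief slope check at the break points yields pointwise concavity of $\pi_i(\cdot; \w)$, which is preserved under the expectation over $\w$.

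The genuine novelty, and the point where Assumption \ref{a:fixp} ($|\lambda| < p < q$) must enter essentially, is the bonus regime $\lambda > 0$, where case 2 becomes pointwise convex and straightforward concavity breaks. The rescue is that the case 2 slope can be written in the form $p - \lambda f(v)$ with $v = -d_i$ and $f(v) \leq 1$, so $\partial \pi_i / \partial c_i \geq p - \lambda > 0$ throughout case 2, uniformly in $\w$; combined with slope $p$ on the linear ``otherwise'' piece and the eventual slope $p - q < 0$ deep in the concave case 1 piece, each pointwise map $\pi_i(\cdot; \w)$ is unimodal. The hard part will be upgrading this pointwise unimodality to quasi-concavity of $\mathbb{E}_{\w}[\pi_i]$, because the break points $w_i$ and $w_i - D_{\w}$ themselves depend on $\w$ and quasi-concavity is not in general inherited by averaging. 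My plan is to exploit the realization-independent slope bound $p - \lambda > 0$ whenever $\w$ places the integrand in ``case 2 $\cup$ otherwise'' (which, by Fubini, translates to a strictly positive lower bound on $\partial \mathbb{E}_{\w}[\pi_i]/\partial c_i$ on an initial interval of $c_i$) and to use preservation of concavity under averaging on the complementary range (where the dominant contribution comes from realizations sitting in the concave case 1 piece), then to glue the two regimes to obtain quasi-concavity of $\mathbb{E}_{\w}[\pi_i]$. Once quasi-concavity is secured, Debreu--Fan--Glicksberg delivers the pure-strategy Nash equilibrium.
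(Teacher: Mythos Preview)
Your plan coincides with the paper's proof almost exactly: both invoke Debreu--Glicksberg--Fan, both dispatch continuity and then split on the sign of $\lambda$, both handle $\lambda\le 0$ by computing the second derivative of $\phi_i$ in $c_i$ on the two nontrivial pieces (your sign observations $D-S^+\le 0$ and $D+S^-\ge 0$ are precisely the paper's $\alpha_n-\alpha\ge 0$ and $\alpha_p+\alpha\ge 0$), and both handle $\lambda>0$ by showing the derivative of the payoff is bounded below by $p-\lambda>0$ on the surplus/neutral pieces while the shortfall piece is concave, so that the expected payoff is ``non-decreasing then concave'' and hence quasi-concave.

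The one place you diverge is the final gluing step in the bonus case. The paper packages the argument by writing $\hat\pi_i=\max\{t_{i,1},t_{i,2}\}$ for two auxiliary pieces it claims are individually quasi-concave, and appeals to ``pointwise maximum of quasi-concave functions is quasi-concave.'' You instead correctly flag that quasi-concavity is not closed under averaging and propose to exploit directly the \emph{realization-independent} lower bound $\partial\pi_i/\partial c_i\ge p-\lambda$ on the surplus/neutral regions together with concavity (which \emph{is} closed under expectation) on the shortfall region. Your route is the safer one: the general statement the paper invokes about maxima of quasi-concave functions is false without additional structure, whereas your argument only uses that a function which is non-decreasing up to some point and concave thereafter is quasi-concave. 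What the paper's packaging buys is brevity; what yours buys is a correct justification of the same conclusion.
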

\begin{proof}
	See Appendix.
\end{proof}

\begin{remark}
	It can be shown that the game defined by cost sharing mechanism $\phi(\dd,\theta,\tilde{J})$ does not necessarily have a pure-strategy Nash equilibrium, if $\lambda$ is positive. Additionally, the proportional cost sharing mechanism, $\phi(\dd,\theta,\tilde{J})$ penalizes the suppliers who decreased the cost of the aggregate in certain cases, which is not desired. On the other hand, the contract game described by $\phi(\dd,\theta,J^*)$, constitutes at least one pure strategy Nash equilibrium regardless of the sign of $\lambda$, and does not penalize suppliers whose deviations compensate for the net shortfall of the aggregate.
\end{remark}

%%%%%%%%%%%%%%%%%%%%%%%%%%%%%%%%%%%%%%%%%%%%%%%%%%%%%%%%%%%%%%%%%%%%%%%%%%%%%%%%%%

\section{Illustrative Examples} \label{Sec:example}
In this section, we provide two examples, which illustrate the results on the existence of a pure-strategy Nash equilibrium for the contract game. In the first example, there is no bonus for overproduction. In this example, our goal is to illustrate that the expected payoff function for supplier $i$ is concave in $c_i$. The second example shows that if there is a bonus in the market, the proportional cost sharing mechanism proposed here also results in a contract game with a pure strategy Nash equilibrium. Additionally, for the case that $\lambda >0$, we illustrate that the expected payoff function of supplier $i$ is quasi-concave in $c_i$. 

Consider an aggregate of two wind suppliers with the following wind power probability distributions:
\begin{equation} \label{eqn:ex}
w_1 =\begin{cases}
1 & \text{  w.p.  } 0.7\\
2 & \text{  w.p.  } 0.3
\end{cases}  
\text{   ,   }w_2 =\begin{cases}
1 & \text{  w.p.  } 0.3\\
2 & \text{  w.p.  } 0.7
\end{cases}. 
\end{equation}
The market clearing price is set to be $p = 0.5$ and the expected penalty price for surplus is $q=1.5$. The set of feasible contracts for the two players are $\mathcal{C}_1=\mathcal{C}_2=[0,2]$. 

In order to verify the results of Theorem \ref{thm:J2}, we calculate the Nash equilibrium of the game with cost sharing mechanism defined by $\phi(\dd,\theta,J^*)$ for two cases: $\lambda>0$ and $\lambda<0$.

\begin{example}
	Consider the aggregate described by \eqref{eqn:ex}. Assume the cost for energy surplus is negative, \textit{i.e.} $\lambda = -0.4$. The payoffs are calculated by using cost function $\pi_i$. Fig.~\ref{fig:ex2}.(a) and \ref{fig:ex2}.(b) illustrate the expected payoffs for supplier 1 and 2 versus the strategies of the two players.
	There exists a unique pure strategy Nash equilibrium for this example game at $(c_1,c_2) = (1,2)$, and the payoffs for the two suppliers at the Nash equilibrium are: $(\pi_1,\pi_2) = (0.416,0.685)$. Fig.~\ref{fig:ex2}.(c) illustrates the first supplier's payoff, when $c_2$ is fixed at $1.5$. Clearly $\pi_1$ is concave in $c_1$ and $\pi_2$ is concave in $c_2$.  
\end{example} \vspace{-0.3cm}
\begin{figure}[h!]  
	\includegraphics[width=3.3in]{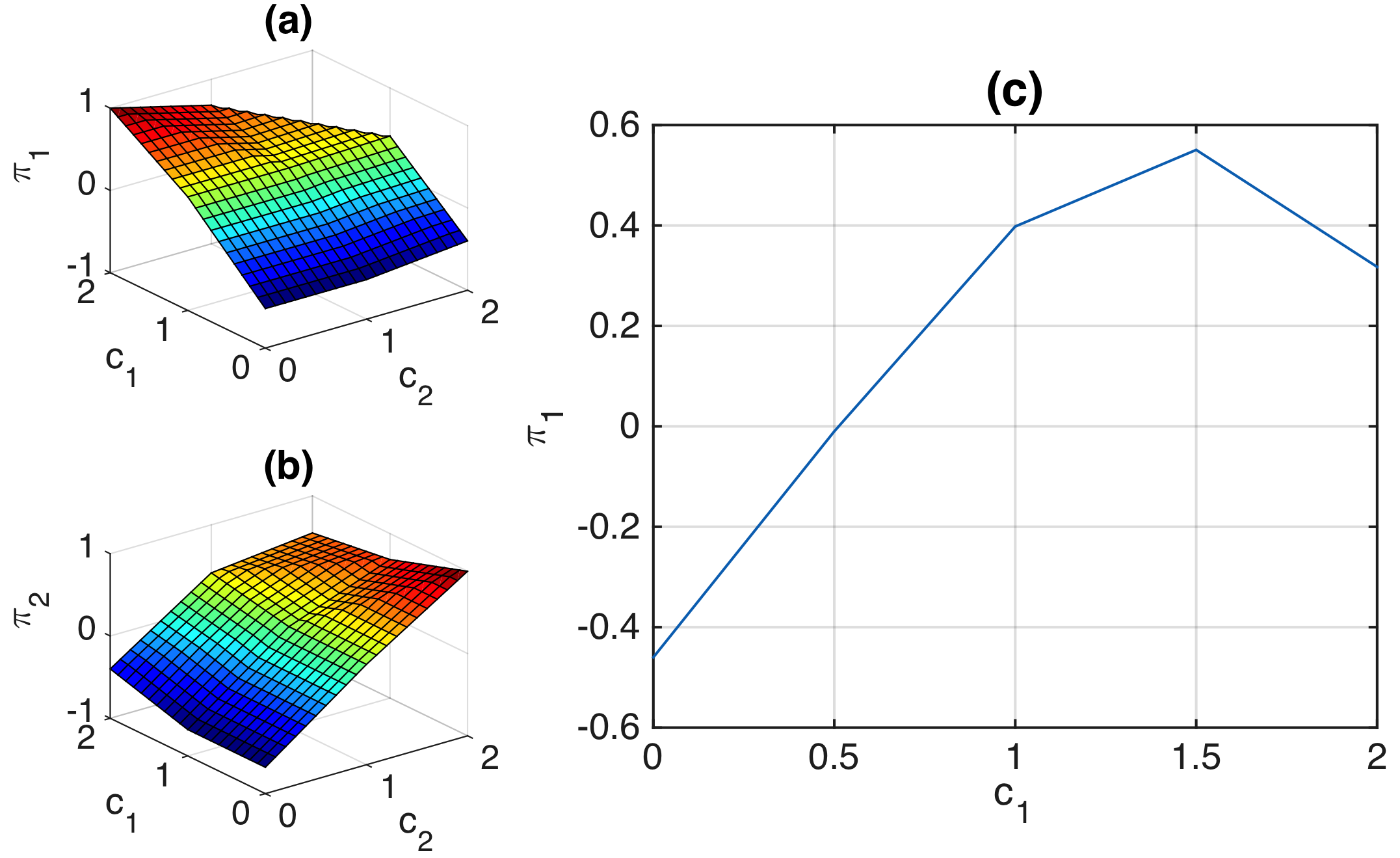}
	\caption{\textbf{(a)}: supplier 1's payoff vs. $c$. $\;$ \textbf{(b)}: supplier 2's payoff vs. $c$. $\quad$
		\textbf{(c)}: illustration of concavity of the payoff of supplier 1 in $c_1$ at $c_2 = 1.5$.}
	\label{fig:ex2}
\end{figure} \vspace{-0.5cm}
\begin{figure}[h!]  
	\includegraphics[width=3.3in]{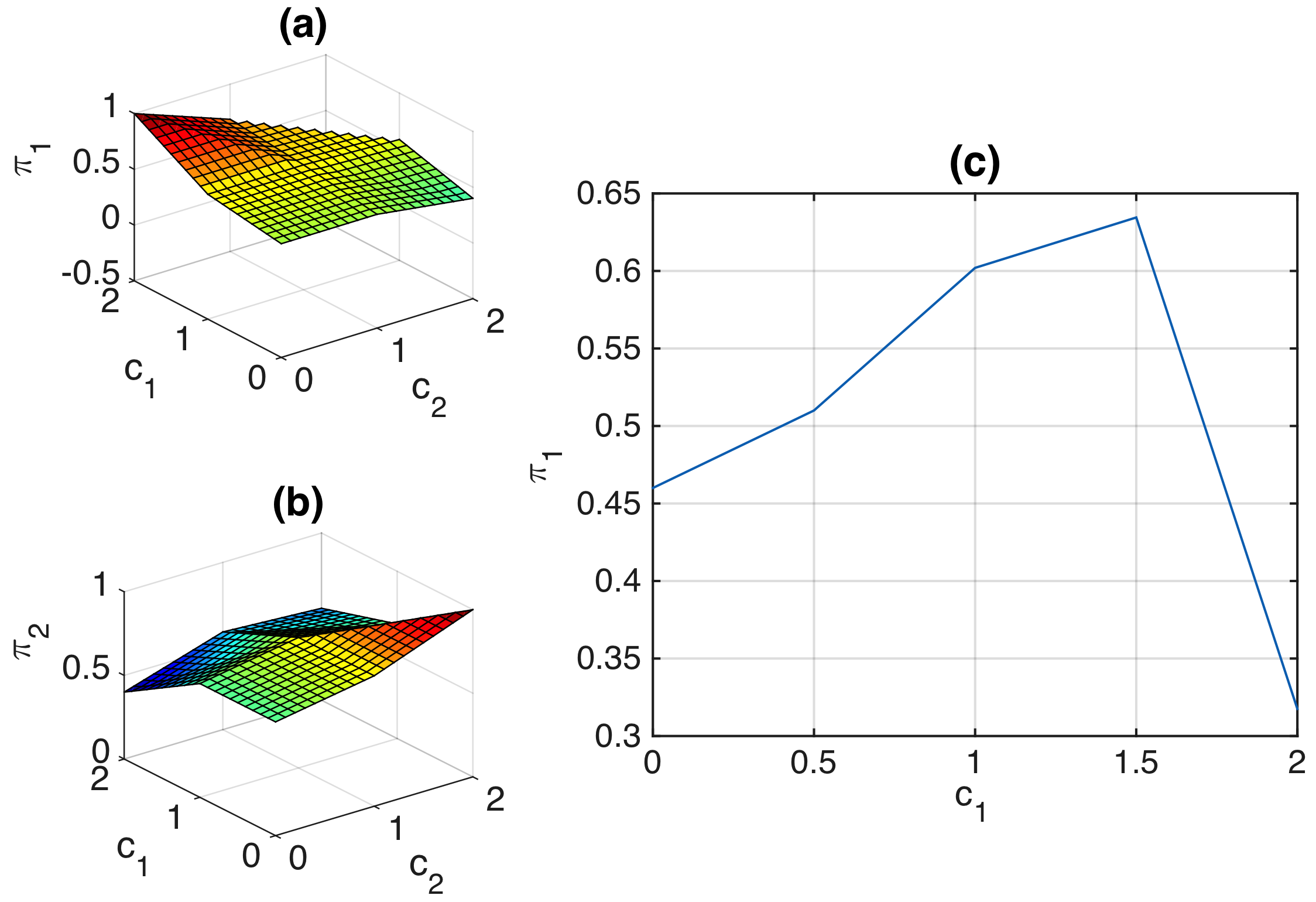}
	\caption{\textbf{(a)}: supplier 1's payoff vs. $c$. $\;$ \textbf{(b)}: supplier 2's payoff vs. $c$. $\quad\quad$ \textbf{(c)}: illustration of quasi-concavity of the payoff of supplier 1 in $c_1$ at $c_2 = 1.5$.}
	\label{fig:ex3}
\end{figure}
\begin{example}
	Consider the same example, but change the value of surplus imbalance price to $\lambda = 0.4$. The payoffs for supplier 1 and 2 versus the strategies of the two players are shown in Fig.~\ref{fig:ex3}.(a) and  \ref{fig:ex3}.(b). The Nash equilibrium for this game occurs at $(c_1,c_2) = (2,1)$ and the pair of payoff values for the two players are $(\pi_1,\pi_2) = (0.685,0.584)$. One can see that the payoff functions $\pi_i$ are quasi-concave in $c_i$ and continuous in both $c_1$ and $c_2$. If we use $\lambda = 0.25$ in this example, there will be infinitely many Nash equilibria on a line segment with equation $c_2 = 3- c_1, c_1,c_2 \in [1 \; 2]$.
\end{example}

%%%%%%%%%%%%%%%%%%%%%%%%%%%%%%%%%%%%%%%%%%%%%%%%%%%%%%%%%%%%%%%%%%%%%%%%%%%%%%%%%%

\section{Conclusion} \label{Sec:conclusion}
Aggregation of Renewable energy resources is proved to be effective for reducing the uncertainty associated with renewables. In this paper, we used a generalized model of energy markets where surplus of energy is allowed, and the effect of bonus for surplus is considered. Then a proportional cost sharing mechanism is proposed for the generalized market model. This cost sharing mechanism entices the renewable suppliers to join the aggregate, by increasing their payoff compared to when they bid directly to the system operator. It also satisfies the desired properties such as budget balancedness, fairness and monotonicity. Additionally, the net cost of the aggregate is shared among the suppliers proportional to their part in the total cost. In addition, it is proved that the contract game, which leverages this cost sharing mechanism results in at least one pure strategy Nash equilibrium.

Generalization of the results of this paper to the markets with different system cost functions, e.g., the cost function that calculates the real-time deviation rather than the average, is an interesting future direction. Furthermore, the quality of obtained Nash equilibrium can be evaluated by exploiting measures of efficiency loss \cite{Koutsoupiasworst2009, Nisan2007 }.
\bibliographystyle{unsrt}
\bibliography{GTISG}

%%%%%%%%%%%%%%%%%%%%%%%%%%%%%%%%%%%%%%%%%%%%%%%%%%%%%%%%%%%%%%%%%%%%%%%%%%%%%%%%%
\begin{appendices}
	\section{Proof of Theorem \ref{thm:J2}}
	\begin{proof}
		Let us first present some preliminary results from convex optimization and game theory literature.
		\subsection{\textbf{Convex optimization results}}
		\begin{lemma} \label{lem:IDexchange} \cite{troutman1995variational}
			Let $f(c,w):\mathcal{C}\times \mathcal{C} \rightarrow \R$ be a differentiable function, where $\mathcal{C}$ is a convex and compact set. Then 
			\begin{equation*}
			\begin{aligned}
			F(c) = \int_{\mathcal{C}} f(c,w) dw
			\end{aligned}
			\end{equation*}
			is differentiable and $	\frac{d F(c)}{d c} = \int_{\mathcal{C}} \frac{\partial f(c,w)}{\partial c}dw$.
			
		\end{lemma}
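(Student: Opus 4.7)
The plan is to prove this classical Leibniz rule by reducing it to an interchange of limit and integration, justified by uniform continuity on a compact set. First I would form the difference quotient
$$
\frac{F(c+h) - F(c)}{h} = \int_{\mathcal{C}} \frac{f(c+h,w) - f(c,w)}{h}\, dw,
$$
which is valid because integration over a fixed set is linear. The mean value theorem applied to the scalar map $s \mapsto f(c + s, w)$ on $[0,h]$ then produces, for each $w$, a point $\xi_w(h) \in (0,1)$ with
$$
\frac{f(c+h,w) - f(c,w)}{h} = \frac{\partial f}{\partial c}\bigl(c + \xi_w(h)\,h,\, w\bigr).
$$

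The next step is to pass to the limit $h \to 0$ inside the integral. Interpreting differentiability in the paper's sense as continuous differentiability, $\partial f/\partial c$ is continuous on the compact set $\mathcal{C} \times \mathcal{C}$ and hence uniformly continuous there. Therefore, for every $\epsilon > 0$ there exists $\delta > 0$ such that whenever $|h| < \delta$ the integrand differs from $\partial f(c,w)/\partial c$ by at most $\epsilon$, uniformly in $w \in \mathcal{C}$. Integrating this bound over $\mathcal{C}$ (which has finite measure by compactness) produces a bound of order $\epsilon \cdot \mathrm{vol}(\mathcal{C})$ on the distance between the difference quotient of $F$ and $\int_{\mathcal{C}} \frac{\partial f(c,w)}{\partial c}\, dw$. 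Letting $\epsilon \to 0$ establishes simultaneously the existence of $dF/dc$ and the claimed formula.

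The main, and indeed only substantive, obstacle is the justification for swapping the limit with the integral. If one wished to weaken the hypothesis to mere differentiability (without continuity of $\partial f / \partial c$), the uniform-continuity route fails and one must instead invoke Lebesgue's dominated convergence theorem: the mean value theorem combined with the compactness of $\mathcal{C}\times\mathcal{C}$ produces a uniform bound on the difference quotients, which serves as the dominating integrable function, after which pointwise convergence of the quotients to $\partial f/\partial c$ suffices. Since this lemma is cited as a standard preliminary from Troutman's textbook, I would simply appeal to the appropriate formulation there rather than reproving it in full.
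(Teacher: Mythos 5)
The paper offers no proof of this lemma at all --- it is imported verbatim from Troutman's textbook as a preliminary --- so there is nothing internal to compare against. Your main argument is the standard and correct one: write the difference quotient of $F$, push it inside the integral by linearity, apply the mean value theorem in the $c$-slot, and use uniform continuity of $\partial f/\partial c$ on the compact set $\mathcal{C}\times\mathcal{C}$ (reading ``differentiable'' as $C^1$, which is how the paper uses the lemma) to pass the limit through the integral with an $\epsilon\cdot\mathrm{vol}(\mathcal{C})$ error. One caveat on your closing aside: under \emph{mere} differentiability, compactness of $\mathcal{C}\times\mathcal{C}$ does \emph{not} give a uniform bound on the difference quotients --- the derivative of a differentiable function can be unbounded on a compact set (e.g. $x^2\sin(1/x^2)$ near the origin), so the dominating function you propose for the dominated-convergence route need not exist without an extra hypothesis such as local Lipschitz continuity in $c$. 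This does not affect your main argument, which stands as written under the $C^1$ reading.
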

		\begin{lemma}\label{thm:Econvexity}
			If a function $h(x,y)$ is convex in $x$, then the expectation of $h$, $\mathbb{E}[h(x,y)]$, with respect to $y$ is also convex in $x$.
		\end{lemma}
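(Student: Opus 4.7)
The plan is to give a direct argument from the definition of convexity combined with monotonicity and linearity of expectation. The proof is essentially a one-step pointwise-then-integrate argument, and I do not anticipate any genuine obstacle beyond a minor integrability check.

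First I would fix arbitrary points $x_1, x_2$ in the domain and a scalar $\alpha \in [0,1]$. By the hypothesis that $h(\cdot, y)$ is convex for every fixed $y$, we obtain the pointwise inequality
\begin{equation*}
h(\alpha x_1 + (1-\alpha) x_2,\, y) \;\leq\; \alpha\, h(x_1, y) + (1-\alpha)\, h(x_2, y),
\end{equation*}
valid for every realization $y$ in the support of the distribution. Note that this step uses nothing but the definition of convexity applied slice by slice in $y$; the second argument $y$ plays no role yet.

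Next I would take expectation with respect to $y$ on both sides. Monotonicity of expectation preserves the inequality, and linearity splits the right-hand side into two separate expectations, giving
\begin{equation*}
\mathbb{E}[h(\alpha x_1 + (1-\alpha) x_2,\, y)] \;\leq\; \alpha\, \mathbb{E}[h(x_1, y)] + (1-\alpha)\, \mathbb{E}[h(x_2, y)].
\end{equation*}
Defining $H(x) := \mathbb{E}[h(x, y)]$, the inequality above is exactly $H(\alpha x_1 + (1-\alpha) x_2) \leq \alpha H(x_1) + (1-\alpha) H(x_2)$, i.e., convexity of $H$ in $x$.

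The only technical caveat, which I would mention briefly, is that one must ensure $\mathbb{E}[h(x, y)]$ exists for each $x$ so that monotonicity and linearity may be invoked. In the paper's setting the relevant integrands are continuous (or at worst piecewise continuous) in $y$ on the compact strategy sets $\mathcal{C}_i$, so integrability is automatic. There is thus no hard step; the main thing is to be careful to apply convexity pointwise in $y$ \emph{before} taking the expectation, rather than trying to manipulate $H$ directly.
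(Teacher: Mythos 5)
Your proof is correct: the paper states this lemma as a known preliminary fact without supplying any proof, and your argument --- applying convexity pointwise in $y$ and then using monotonicity and linearity of the expectation, with the brief integrability caveat --- is exactly the standard justification one would give. Nothing is missing.
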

		\begin{lemma} [Section 3.4 in \cite{Boyd2004}]  \label{lem:quasiconvex} 
			Let $f:\mathcal{C}\rightarrow \R$ be a differentiable function on $\mathcal{C}$ , where $\mathcal{C} \subset \R^n$ is convex and compact. Then $f$ is quasi-concave on the interior of $\mathcal{C}$ if and only if $ \forall \hat c_i,c_i\in \mathcal{C}$:
			\[
			f(\hat c_i)-f(c_i)\geq 0 \implies \frac{\partial(f(c))}{\partial c} \bigg |_{c=c_i}(\hat c_i - c_i) \geq 0.
			\]
		\end{lemma}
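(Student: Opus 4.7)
The plan is to prove the two implications separately, using the definition of quasi-concavity that for all $x,y$ in the domain and $\lambda \in [0,1]$, $f(\lambda x + (1-\lambda)y) \geq \min\{f(x), f(y)\}$, or equivalently, that every superlevel set of $f$ is convex.

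For the forward direction, suppose $f$ is quasi-concave and take any $\hat c_i, c_i \in \mathcal{C}$ with $f(\hat c_i) \geq f(c_i)$. For $\tau \in [0,1]$, set $c(\tau) = c_i + \tau(\hat c_i - c_i)$. Quasi-concavity gives $f(c(\tau)) \geq \min\{f(c_i), f(\hat c_i)\} = f(c_i)$, so $f(c(\tau)) - f(c_i) \geq 0$ for every $\tau \in [0,1]$. Dividing by $\tau > 0$ and letting $\tau \to 0^+$, differentiability yields $\frac{\partial f(c)}{\partial c}\big|_{c=c_i}(\hat c_i - c_i) \geq 0$, which is the stated first-order condition.

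For the reverse direction, argue by contradiction. Assume the first-order condition holds but $f$ is not quasi-concave on the interior of $\mathcal{C}$. Then there exist $x,y$ in the interior and $\lambda^* \in (0,1)$ with $f(x + \lambda^*(y-x)) < \min\{f(x), f(y)\}$; without loss of generality $f(y) \geq f(x)$. Define $g:[0,1] \to \R$ by $g(t) = f(x + t(y-x))$; then $g$ is differentiable with $g'(t) = \nabla f(x + t(y-x))^T(y-x)$, and the failure of quasi-concavity gives an interior $t$ where $g(t) < g(0) \leq g(1)$. Let $t^*$ attain the minimum of $g$ on $[0,1]$; necessarily $t^* \in (0,1)$ and $g(t^*) < g(0)$. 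Then set $t_1 = \inf\{t > t^* : g(t) \geq g(0)\}$; continuity and $g(1) \geq g(0)$ guarantee $t^* < t_1 \leq 1$ with $g(t_1) = g(0)$.

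The main work is showing that this configuration is incompatible with the first-order condition, which I expect to be the main obstacle since a single application of the first-order inequality is not enough: it only forces a derivative to vanish at one point, not along an interval. The trick is to apply the condition twice at every interior $t \in (t^*, t_1)$. Writing $z_t = x + t(y-x)$, from $f(x) = g(0) > g(t) = f(z_t)$ the first-order condition gives $\nabla f(z_t)^T(x - z_t) \geq 0$, which simplifies to $g'(t) \leq 0$; from $f(x + t_1(y-x)) = g(t_1) = g(0) > g(t)$ it gives $\nabla f(z_t)^T((x+t_1(y-x)) - z_t) \geq 0$, which simplifies to $g'(t) \geq 0$. Hence $g'(t) = 0$ on the whole interval $(t^*, t_1)$, so $g$ is constant on $[t^*, t_1]$ by the mean value theorem and continuity, forcing $g(t^*) = g(t_1) = g(0)$ and contradicting $g(t^*) < g(0)$. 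This completes the reverse direction and the proof.
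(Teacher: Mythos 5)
The paper does not actually prove this lemma; it is quoted as a known result from Section 3.4 of Boyd and Vandenberghe, so there is no in-paper argument to compare against. Your proof is correct and is essentially the standard argument for the first-order characterization of quasi-concavity: the forward direction via the one-sided difference quotient along the segment, and the reverse direction by contradiction along a line, where the genuinely delicate point---that a single application of the first-order inequality only kills the derivative at one point---is resolved by applying the condition twice at each $t\in(t^*,t_1)$ (once toward $x$, once toward $z_{t_1}$) to force $g'\equiv 0$ on the interval and hence constancy, contradicting $g(t^*)<g(0)$. The only cosmetic mismatch is with the lemma's (somewhat loosely stated) restriction to the interior of $\mathcal{C}$, which your argument respects by choosing $x,y$ interior, so nothing substantive is missing.
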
 
		\subsection{\textbf{Game theoretic results}}
		A (quasi-)concave game is a game in which, the payoff function for each player is (quasi-)concave in her strategy. 
		\begin{lemma} (Debreu, Glicksberg, Fan) \cite{Debreau1952, Fudenberg1991game} \label{lem:quasi}
			A game $G(\mathcal{N},\mathcal{C},\pmb{\pi})$ with compact and convex set of strategies, $\mathcal{C}$, admits at least one pure strategy Nash equilibrium, if the payoff functions $\pi_i$ for all $i\in N$ are continuous in $c$ and quasi-concave in $c_i$. \\
		\end{lemma}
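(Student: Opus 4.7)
The plan is to prove the Debreu–Glicksberg–Fan lemma by the classical route: exhibit a Nash equilibrium as a fixed point of the best-response correspondence, using Kakutani's fixed point theorem. First I would define, for each player $i\in\mathcal{N}$, the best-response correspondence
\[
BR_i(c_{-i}) = \arg\max_{c_i\in\mathcal{C}_i}\pi_i(c_i,c_{-i}),
\]
and set $BR(c) = BR_1(c_{-1})\times\cdots\times BR_n(c_{-n})\subseteq\mathcal{C}$. The strategy is to verify that $BR$ satisfies the hypotheses of Kakutani's theorem on the nonempty, compact, convex set $\mathcal{C}$: (i) $BR(c)$ is nonempty for each $c$, (ii) $BR(c)$ is convex for each $c$, and (iii) $BR$ has a closed graph (equivalently, is upper hemi-continuous with closed values). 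A fixed point $c^*\in BR(c^*)$ is then, by unpacking the definition, precisely a pure-strategy Nash equilibrium.

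For nonemptiness, I would note that $\mathcal{C}_i$ is compact (as a projection of the compact $\mathcal{C}$) and $\pi_i(\cdot,c_{-i})$ is continuous, so Weierstrass's theorem yields a maximizer. For convexity of $BR_i(c_{-i})$, suppose $c_i^1,c_i^2\in BR_i(c_{-i})$ achieve the common maximum value $v$. Quasi-concavity of $\pi_i(\cdot,c_{-i})$ gives, for any $\alpha\in[0,1]$,
\[
\pi_i\bigl(\alpha c_i^1+(1-\alpha)c_i^2,\,c_{-i}\bigr) \;\geq\; \min\{\pi_i(c_i^1,c_{-i}),\pi_i(c_i^2,c_{-i})\} \;=\; v,
\]
and since $v$ is the maximum value, equality holds and the convex combination also lies in $BR_i(c_{-i})$. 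Convexity of $\mathcal{C}_i$ (a projection of the convex $\mathcal{C}$) ensures the combination is feasible.

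The main obstacle is clause (iii): establishing that $BR$ has a closed graph. I would appeal to Berge's Maximum Theorem applied to the parametric optimization problem $\max_{c_i\in\mathcal{C}_i}\pi_i(c_i,c_{-i})$, viewed as a function of the parameter $c_{-i}$ with constant (hence trivially continuous) constraint correspondence $c_{-i}\mapsto\mathcal{C}_i$. Since $\pi_i$ is jointly continuous in $c$, Berge's theorem guarantees that $BR_i$ is upper hemi-continuous and compact-valued; equivalently, its graph is closed in $\mathcal{C}_{-i}\times\mathcal{C}_i$. Taking the Cartesian product, $BR$ itself has a closed graph in $\mathcal{C}\times\mathcal{C}$, and its values are the products of nonempty convex sets, hence nonempty and convex.

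With (i)–(iii) in hand, Kakutani's fixed point theorem applies to $BR:\mathcal{C}\rightrightarrows\mathcal{C}$ on the nonempty, compact, convex domain $\mathcal{C}$, yielding some $c^*\in\mathcal{C}$ with $c^*\in BR(c^*)$. Unwinding this coordinate-wise, $c_i^*\in\arg\max_{c_i\in\mathcal{C}_i}\pi_i(c_i,c_{-i}^*)$ for every $i$, which is precisely the Nash equilibrium condition. The only genuinely delicate point is invoking Berge's theorem cleanly, since everything else follows from elementary compactness and convexity arguments; if one prefers to avoid naming Berge, the closed-graph property can be checked by hand from sequential continuity of $\pi_i$ together with compactness of $\mathcal{C}_i$.
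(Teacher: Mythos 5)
Your proof is correct, and it is essentially the canonical argument: the paper does not prove this lemma at all, but simply cites it as the classical Debreu--Glicksberg--Fan existence theorem, and the Kakutani-plus-Berge route you follow (nonempty values by Weierstrass, convex values by quasi-concavity, closed graph by the Maximum Theorem, then a fixed point of the best-response correspondence) is exactly the standard proof found in the cited sources. All the hypotheses you need are available here --- each $\mathcal{C}_i=[0,c_{i,max}]$ is a nonempty compact convex subset of $\mathbb{R}$, so $\mathcal{C}$ is a nonempty compact convex subset of $\mathbb{R}^n$ and Kakutani applies as stated --- so there is no gap to report.
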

		\subsection{\textbf{Main proof}} 
		\textbf{Notation:} We denote every index except $i$ with subscript $-i$. Also, for the sake of simplicity of the proof, we present functions with respect to $c_i$ and $w_i$ instead of deviation $d_i$.
		Consider two cases:\\
		\textbf{1. } $\pmb{\lambda \leq 0}$
		
		By Lemma \ref{lem:quasi} the proof reduces to showing that $\pi_i(\cc)$ is continuous in $\cc$ and concave in $c_i$ for all $i\in \mathcal{N}$.
		
		With Assumption \ref{a:fixp}: $\pi_i(\cc) = pc_i-\mathbb{E}[\phi_i(\dd,\theta,J^*)]$. $pc_i$ is affine in $c_i$ so it suffices to show that $\mathbb{E}[\phi_i(\dd,\theta,J^*)]$ is convex in $c_i$. Define:
		{\small\[
			\alpha = \sum_{j \in \mathcal{N}, j\neq i} (d_j), \; \alpha_n = \sum_{j \in N, j\neq i} (d_j)^+, \; \alpha_p = \sum_{j \in N, j\neq i} (-d_j)^+.
			\]
		}
		Clearly $\alpha = \alpha_n-\alpha_p$. The proportional cost sharing mechanism can be written as follows:
		\[
		\phi_i(\dd,\theta,J^*) = \begin{cases}
		q\frac{(c_i-w_i+\alpha)(c_i-w_i)}{c_i-w_i+\alpha_n} &\text{If  } c\geq w , c_i \geq w_i\\
		\lambda\frac{(c_i-w_i+\alpha)(c_i-w_i)}{c_i-w_i-\alpha_p} & \text{If  } c< w, c_i < w_i\\
		0  &\text{Otherwise}
		\end{cases}.
		\]
		This function is continuous in $\mathcal{C}$ for both $c$ and $w$. The next step is to show that $\phi_i(c,w)$ is convex in $c_i$.
		Consider the following cases:
		
		\textbf{a)} $c\geq w , c_i \geq w_i$: Let
		\begin{align*}
		g_1(\cc,\w) &= \frac{(\alpha+ c_i-w_i)(c_i-w_i)}{c_i-w_i+\alpha_n}.
		\end{align*}
		The first and second derivatives are
		\begin{align*}
		\frac{\partial g_1(c,w)}{\partial c_i}&= \frac{(c_i - w_i)^2 + 2\alpha_n (c_i - w_i)+ \alpha\alpha_n}{(c_i-w_i + \alpha_n)^2},\\
		\frac{\partial^2 g_1(c,w)}{\partial c_i^2}&=\frac{2\alpha_n (c_i - w_i +\alpha_n)(\alpha_n - \alpha)}{(c_i-w_i + \alpha_n)^4}.
		\end{align*}
		Clearly the second derivative is positive, so $qg_1(c,w)$ is convex in $c_i$ for this region.
		
	\textbf{b)} $c< w , c_i < w_i$: Let
		\begin{align*}
		g_2(\cc,\w) =\frac{(c_i-w_i+\alpha)(c_i-w_i)}{c_i-w_i-\alpha_p}.
		\end{align*}
		The first and second derivatives are
		\begin{align*}
		\frac{\partial g_2(c,w)}{\partial c_i}&=\frac{(c_i - w_i)^2 - 2\alpha_p (c_i - w_i)- \alpha\alpha_p}{(c_i-w_i - \alpha_p)^2}\\
		\frac{\partial^2 g_2(c,w)}{\partial c_i^2} &= -\frac{2\alpha_p (w_i - c_i +\alpha_p)(\alpha_p + \alpha)}{(c_i-w_i - \alpha_p)^4}.
		\end{align*}
		Clearly for $c <w,\; c_i<w_i$, $\frac{\partial^2 g_2(c,w)}{\partial c_i^2} \leq 0$, hence for $\lambda \leq 0$, the function $\lambda g_2$ is convex in $c_i$ for this region. For the other scenarios in $\cc, \w$ space, $\phi_i$ is zero. Therefore, $\phi_i$ is a piecewise function, which is continuous and is convex in $c_i$ in its domain, which implies that $\phi(\dd,\theta,J^*)$ is convex in $c_i$. 
		
		By Lemma \ref{thm:Econvexity}, $\mathbb{E}[\phi_i(\dd,\theta,J^*)]$ is also convex in $c_i$. This implies concavity of $pc_i-\mathbb{E}[\phi_i(\dd,\theta,J^*)]$. Therefore, the contract game $G$ is concave, and has at least one pure strategy Nash equilibrium. 
		
		\textbf{2. } $\pmb{\lambda > 0}$
		
		Without loss of generality, we assume the following:
		\begin{itemize}
			\item The distribution function, $F(w)$, is integrable in the domain $\mathcal{C}$.
		\end{itemize} 
		\begin{remark}
			For discrete probability mass function, $\tilde{F}(w)$ the proof is similar, except that the integrals are replaced with sums.
		\end{remark} 
		Define three regions for $c_i$ as:
		{\scriptsize \begin{equation*}
			\begin{aligned}
			\begin{cases}
			\mathcal{R}_1 & \text{If  }c_i\leq \min \{w_i, w_i+\alpha\}\\
			\mathcal{R}_2&\text{If  } \min \{w_i, w_i+\alpha\} \leq c_i\leq \max \{w_i, w_i+\alpha\}\\
			\mathcal{R}_3  & \text{If  } c_i\geq \max \{w_i, w_i+\alpha\} 
			\end{cases}.
			\end{aligned}
			\end{equation*}}
		
		Let us first consider $\hat \pi_i(\dd,\theta,J^*) = \mathbb{E}_w \big[pc_i - \phi_i(\dd,\theta,J^*)\big]$. The objective is to show that this function is quasi-concave. This function is defined as: 
		
		{\scriptsize \begin{equation*}
			\begin{aligned}
			\hat \pi_i(\dd,\theta,J^*)= \begin{cases}
			\int_{\mathcal{W}_1} \big[pc_i-\lambda\frac{(c_i-w_i+\alpha)(c_i-w_i)}{(c_i-w_i-\alpha_p)} \big]F(w)dw & \text{If  }c_i\in \mathcal{R}_1 \\
			\int_{\mathcal{W}_2}pc_iF(w)dw  & \text{If } c_i\in \mathcal{R}_2 \\
			\int_{\mathcal{W}_3}\big[pc_i-q\frac{(c_i-w_i+\alpha)(c_i-w_i)}{c_i-w_i+\alpha_n} \big]F(w)dw &\text{If  } c_i\in \mathcal{R}_3\\
			\end{cases},
			\end{aligned}
			\end{equation*}}
		where $\mathcal{W}_i$ denotes the set of power profiles that cause $c_i$ to be in $\mathcal{R}_i$. If $c_i \in \mathcal{R}_1$, then we show that the function $\hat \pi_i$ is monotonically non-decreasing in $c_i$. We calculate the derivative as:
		
		{\scriptsize \begin{equation*}
			\begin{aligned}
			\int_{\mathcal{W}_1} \Big[p-\lambda \frac{(c_i - w_i)^2 - 2\alpha_p (c_i - w_i)- \alpha\alpha_p}{(c_i-w_i - \alpha_p)^2} \Big]F(w)dw.
			\end{aligned}
			\end{equation*}}
		We used Lemma \ref{lem:IDexchange} to derive the above equation. Inside the integral can be written as the following:
		{\scriptsize \begin{equation*}
			\begin{aligned}
			& \frac{(p-\lambda)(c_i - w_i)^2 + (\lambda-p)\alpha_p (c_i - w_i)}{(c_i-w_i - \alpha_p)^2} \\
			& \quad \quad \quad \quad+ \frac{\lambda \alpha_p (c_i-w_i+\alpha) -p\alpha_p(c_i-w_i-\alpha_p)}{(c_i-w_i - \alpha_p)^2} \geq 0, \text{  in }\mathcal{R}_1,
			\end{aligned}
			\end{equation*}}
		
		 because $p>\lambda$ and $c_i-w_i-\alpha_p \leq c_i-w_i+\alpha \leq 0$. This proves that $\hat \pi_i(\dd,\theta,J^*)$ is monotonically non-decreasing in region $\mathcal{R}_1$.
		
		Now consider $c_i \in \mathcal{R}_2$. In this case, the derivative inside the integral is $p \geq 0$. Hence, $\hat \pi_i(\dd,\theta,J^*)$ is monotonically increasing in region $\mathcal{R}_2$.
		
		Finally, from the proof for $\lambda \leq 0$, we know that $\hat \pi_i(\dd,\theta,J^*)$ is concave in $\mathcal{R}_3$. Observe that $\hat \pi_i(\dd,\theta,J^*)$ is a continuous function in $c_i$. Consider the following two functions:
		{\scriptsize \begin{equation*}
			\begin{aligned}
			t_{i,1}(\dd,\theta,J^*)= \begin{cases}
			\int_{\mathcal{W}_1} \big[pc_i-\lambda\frac{(c_i-w_i+\alpha)(c_i-w_i)}{(c_i-w_i-\alpha_p)} \big]F(w)dw & \text{If  }c_i\in \mathcal{R}_1 \\
			\int_{\mathcal{W}_2}pc_iF(w)dw  & \text{If } c_i\in \mathcal{R}_2 \\
			0 &\text{If  } c_i\in \mathcal{R}_3\\
			\end{cases}.
			\end{aligned}
			\end{equation*}}
		{\scriptsize \begin{equation*}
			\begin{aligned}
			t_{i,2}(\dd,\theta,J^*)= \begin{cases}
			0  & \text{If  }c_i\in \mathcal{R}_1 \\
			0  & \text{If } c_i\in \mathcal{R}_2 \\
			\int_{\mathcal{W}_3}\big[pc_i-q\frac{(c_i-w_i+\alpha)(c_i-w_i)}{c_i-w_i+\alpha_n} \big]F(w)dw &\text{If  } c_i\in \mathcal{R}_3\\
			\end{cases}.
			\end{aligned}
			\end{equation*}}
		It is straightforward to show that both $t_{i,1}$ and $t_{i,2}$ are quasi-concave functions by using Lemma \ref{lem:quasiconvex}. A quasi-concave function increases to a peak and decreases afterwards. The point-wise maximum of quasi-concave functions is also quasi-concave (Chapter 3 in \cite{Boyd2004}). Therefore, $\hat \pi_i = \max \{t_{i,1},t_{i,2}\}$ is a quasi-concave function. Finally, note that $\pi_i(\dd,\theta,J^*) = \hat \pi_i(\dd,\theta,J^*)$, hence it is quasi-concave. By Lemma \ref{lem:quasi}, the contract game $G = (\mathcal{N}, \mathcal{C}, \pi)$ is a quasi-concave game and has at least one pure-strategy Nash equilibrium. 
	\end{proof}	
\end{appendices}

%%%%%%%%%%%%%%%%%%%%%%%%%%%%%%%%%%%%%%%%%%%%%%%%%%%%%%%%%%%%%%%%%%%%%%%%%%%%%%%%%%

\addtolength{\textheight}{-12cm}  	
	
\end{document}